\newtheorem{theorem}{Theorem}[section]
\newtheorem{definition}[theorem]{Definition}
\newtheorem{remark}[theorem]{Remark}
\newtheorem{proposition}[theorem]{Proposition}
\newtheorem{lemma}[theorem]{Lemma}
\newtheorem{example}[theorem]{Example}
\newtheorem{examples}[theorem]{Examples}
\newtheorem{notation}[theorem]{Remark on notation}
\title[On the first time that an Ito process hits a barrier]{On the first time that an Ito process hits a barrier}
\author{Gerardo Hernandez-del-Valle}
\address{Statistics Department, Columbia University\\1255 Amsterdam Ave. Room 1005, New York, N.Y., 10027.}
\email{gerardo@stat.columbia.edu}
\keywords{Doob's $h$-transform, Brownian bridge, Bessel processes, potential theory, first hitting time}
\date{July 9, 2012}
\thanks{The research of the author was partially supported by Algorithmic Trading Management LLC}
\subjclass[2000]{Primary: 37A50, 60G07, 60H30}
\begin{document}
\maketitle
\begin{abstract} This work deals with first hitting time densities of Ito processes whose local drift can be modeled in terms of a solution to Burgers equation. In particular, we derive the densities of the first time that these processes reach a moving boundary.  We distinguish two cases: (a) the case in which the process has unbounded state space before absorption, and (b) the case in which the process has bounded state space before absorption.
The reason as to why this distinction has to be made will be clarified.

Next, we classify processes whose local drift can be expressed as a linear combination to solutions of Burgers equation. For example the local drift of a Bessel process of order 5 can be modeled as the sum of two solutions to Burgers equation and thus will be classified as of class $\mathcal{B}^2$. Alternatively, the Bessel process of order 3 has a local drift that can be modeled as a solution to Burgers equation and thus will be classified as of class $\mathcal{B}^1$.
Examples of diffusions within class $\mathcal{B}^1$, and hence those to which the results described within apply, are: Brownian motion with linear drfit, the 3D Bessel process, the 3D Bessel bridge, and the Brownian bridge.\end{abstract}
\section{Introduction}
This work deals with processes whose local drift is modeled in terms of a solution to Burgers equation
\begin{eqnarray*}
-\mu_t(t,x)=\frac{1}{2}\mu_{xx}(t,x)+\mu(t,x)\cdot\mu_x(t,x),\quad (t,x)\in\mathbb{R}^+\times\mathbb{R}.
\end{eqnarray*}
Some of these processes appear prominently in stochastic analysis. Its applications in finance, as well as its appearance  in statistical problems is also noteworthy.  Among examples of processes with this property we can mention: Brownian motion, Brownian motion with linear drift [for financial applications of this process see Section 5.5.8 in Karatzas and Shreve (1991)], the 3D Bessel process,  the 3D Bessel bridge  [some recent financial applications of this process can be found in Davis and Pistorius (2010), it is also related to the Kolmogorov-Smirnov and Cram\'er von Mises tests, see Gikhman (1957) and Kiefer (1959)], and  the Brownian bridge [for applications of this process in tests of Kolmogorov-Smirnov type see Kolmogorov (1933), Smirnov (1948), and Andel (1967)] . %On the other hand there are processes, as for instance the Bessel processes of odd order greater than 3, whose local drift is modeled in terms of sums solutions to Burgers' equation. 
%The techniques described within this work only deal with  the first case. This distinction is clarified within.

In fact any Bessel process of odd order greater than or equal to 3 has a local drift which can be modeled as linear combinations of solutions to Burgers equation. For instance, the Bessel process of order 5 has a local drift which is a sum of two solutions of Burgers equation. In general a Bessel process of order $2k+1$, $k\geq 1$, has a local drift which can be written as a sum of $k$ solutions to Burgers equation. This leads us to classify SDEs to be of class $\mathcal{B}^k$ if its local drift is a sum of $k$ solutions to Burgers equation. Regarding the related topic of hitting times of Bessel processes there are a number of recent papers: e.g.  Salminen and Yor (2011) or  Alili and Patie (2010).

%Furthermore, we derive derive the densities of the first time that these processes

In this paper we derive the densities of the first time that processes of class $\mathcal{B}^1$ reach moving boundaries. Furthermore  we assume that the moving boundaries are real valued  and twice continuously differentiable. To this end we distinguish two cases: (a) the case in which the process has unbounded state space before absorption, and (b) the case in which the process has bounded state space before absorption. 
An example of the second case is the density of the first time that a 3-D Bessel bridge started at $y>0$, and absorbed at zero at time $s$, hits a fixed level $a$, where $y<a$. That is, the 3-D Bessel bridge lives on $(0,a)$ before being absorbed at either level $a$, or at level $0$ at time $s$, see for instance Hernandez-del-Valle (2012).
%Next, we classify processes whose local drift can be expressed as linear combinations of solution to Burgers' equation. For example: (1) the local drift of a Bessel processes of odd order 5  can be modeled as the sum two solutions to Burgers' equation and thus will be of class $\mathcal{B}^2$.  (2) A Brownian motion with linear drift can be modeled in terms of a solution to Burgers' equation and hence  will be of class $\mathcal{B}^1$. This classification will allow us to obtain general representations of the first hitting time densities of processes of class $\mathcal{B}^1$ in terms of heat polynomials. 

The paper is organized as follows in Section \ref{sec1} an $h$-transform and general notation are introduced. Next, in Section \ref{sec2}, the unbounded state space case is discussed. Section \ref{sec3}, deals with problems where the process lives on a bounded interval. Next, in Section \ref{secbur}, we give a more detailed description of class $\mathcal{B}^1$ and its relationship with class $\mathcal{B}^2$.We conclude in Section \ref{sec4} with some final comments and remarks. 
\section{Preliminaries}\label{sec1}
\begin{notation}  %In our results below we consider real valued differentiable functions $h$, $u$, $v$, and $w$, depending on variables $(t,x;s,y)\in\mathbb{R}^+\times\mathbb{R}\times\mathbb{R}^+\times\mathbb{R}$. 

As in the analysis of diffusion processes, PDEs with derivatives with respect to $(t,x)$ are called backward equations, whereas PDEs with derivatives in $(s,y)$ are called forward equations.

Furthermore, through out this work  (i) $B=\{B_t,\mathcal{F}_t\}_{t\geq 0}$ stands for one-dimensional standard Brownian motion. (ii) For a given function, say $w$, partial differentiation with respect to a given variable, say $x$, will be denoted as $w_x$.
\end{notation}

In the following sections our main tools will be:  (1) An $h$-transform, see Theorem \ref{thm1}, and (2) Ito's lemma, see Lemma \ref{lema}. [Regarding $h$-transforms the reader may consult
 Doob (1984) or Pinsky (1995).]

\begin{theorem}\label{thm1} Let $h$ be of class $C^{1,2}(\mathbb{R}^+\times\mathbb{R})$ as well as a solution to the backward heat equation 
\begin{eqnarray}\label{heatp}
-h_t=\frac{1}{2} h_{xx}.
\end{eqnarray}
 Furthermore, consider processes $X$, and $Y$ which respectively satisfy (at least in the weak sense), the following equations (each under their corresponding measures $\mathbb{P}$, and $\mathbb{Q}$),
\begin{eqnarray}\label{sde1}
(\mathbb{P})\quad dX_t&=&\frac{h_x(t,X_t)}{h(t,X_t)}dt+dB_t\\
\label{sde2}
(\mathbb{Q})\quad dY_t&=&dB_t.
\end{eqnarray}
Moreover, suppose that $f$ is real valued and integrable. Then the following identity holds
\begin{eqnarray}\label{change}
\mathbb{E}_{t,x}^{\mathbb{P}}[f(X_\tau)]=\mathbb{E}^{\mathbb{Q}}_{t,x}\left[\frac{h(\tau,Y_\tau)}{h(t,x)}f(Y_\tau)\right].
\end{eqnarray}
\end{theorem}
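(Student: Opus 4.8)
The plan is to establish the identity \eqref{change} by exhibiting $h(\tau,Y_\tau)/h(t,x)$ as the Radon-Nikodym derivative that converts the law of $Y$ under $\mathbb{Q}$ into the law of $X$ under $\mathbb{P}$, i.e., to recognize this as a Girsanov/Doob $h$-transform change of measure. First I would fix the starting point $(t,x)$ and consider the process
\begin{eqnarray}\label{martproc}
M_s:=\frac{h(s,Y_s)}{h(t,x)},\quad s\geq t,
\end{eqnarray}
where $Y$ is standard Brownian motion under $\mathbb{Q}$ started at $Y_t=x$. The crucial observation is that because $h$ solves the backward heat equation \eqref{heatp}, Ito's lemma applied to $h(s,Y_s)$ kills the drift: $dh(s,Y_s)=\left(h_t+\tfrac12 h_{xx}\right)(s,Y_s)\,ds+h_x(s,Y_s)\,dB_s=h_x(s,Y_s)\,dB_s$, so that $M$ is a local martingale under $\mathbb{Q}$ with stochastic differential $dM_s=M_s\cdot\frac{h_x(s,Y_s)}{h(s,Y_s)}\,dB_s$. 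This exhibits $M$ as a stochastic exponential whose integrand is precisely the drift coefficient appearing in \eqref{sde1}.

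Next I would define a new measure $\mathbb{P}$ on $\mathcal{F}_\tau$ by $d\mathbb{P}=M_\tau\,d\mathbb{Q}$, assuming $M$ is a genuine (uniformly integrable) martingale on $[t,\tau]$ so that $\mathbb{E}^{\mathbb{Q}}_{t,x}[M_\tau]=1$ and $\mathbb{P}$ is a probability measure. By Girsanov's theorem, under $\mathbb{P}$ the process $\widetilde B_s:=B_s-\int_t^s \frac{h_x(u,Y_u)}{h(u,Y_u)}\,du$ is a Brownian motion, and rewriting the dynamics of $Y$ shows that under $\mathbb{P}$ it satisfies $dY_s=\frac{h_x(s,Y_s)}{h(s,Y_s)}\,ds+d\widetilde B_s$, which is exactly the SDE \eqref{sde1} defining $X$. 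Hence $Y$ under $\mathbb{P}$ has the same law as $X$ under $\mathbb{P}$ (at least weakly, which is all that is claimed), and the abstract change-of-measure formula gives
\begin{eqnarray*}
\mathbb{E}^{\mathbb{P}}_{t,x}[f(X_\tau)]=\mathbb{E}^{\mathbb{Q}}_{t,x}[M_\tau\,f(Y_\tau)]=\mathbb{E}^{\mathbb{Q}}_{t,x}\left[\frac{h(\tau,Y_\tau)}{h(t,x)}f(Y_\tau)\right],
\end{eqnarray*}
which is \eqref{change}.

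The step I expect to be the main obstacle is the justification that $M$ is a true martingale rather than merely a local one, since only then does $d\mathbb{P}=M_\tau\,d\mathbb{Q}$ define a probability measure and does the Girsanov argument go through without a defect. In full generality this requires a condition such as a Novikov- or Kazamaki-type criterion on $\int \frac{h_x}{h}\,dB$, or positivity and integrability assumptions on $h$ along the trajectories up to $\tau$; the stopping time $\tau$ and the possibility that $h$ vanishes (which is exactly what distinguishes the bounded-state-space case of absorption from the unbounded one) are where care is needed. Since the theorem as stated takes $f$ integrable and $h\in C^{1,2}$ without further hypotheses, I would either invoke a localization argument with stopping times $\tau_n\uparrow\tau$ together with a uniform-integrability or monotone-convergence passage to the limit, or simply flag that the identity is to be read on events where $M$ remains a martingale, deferring the delicate absorption behavior to the later case analysis in Sections \ref{sec2} and \ref{sec3}.
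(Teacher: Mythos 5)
The paper does not actually prove this theorem in-text: the ``proof'' is a one-line citation to Theorem 2.10 of Hernandez-del-Valle (2011), so there is no internal argument to compare yours against. Your proposal is the standard Doob $h$-transform derivation that such a reference contains, and its skeleton is correct: the backward heat equation kills the drift in Ito's formula, so $M_s=h(s,Y_s)/h(t,x)$ is a $\mathbb{Q}$-local martingale and (where $h>0$) the stochastic exponential of $\int \frac{h_x}{h}\,dB$; taking it as a Radon--Nikodym density and applying Girsanov turns $Y$ into a solution of \eqref{sde1}. You have also correctly isolated the genuine gap, namely that $M$ must be a true martingale with $\mathbb{E}^{\mathbb{Q}}_{t,x}[M_\tau]=1$ for $d\mathbb{P}=M_\tau\,d\mathbb{Q}$ to define a probability measure; the theorem as stated supplies no hypothesis guaranteeing this (nor even that $h>0$ along the path up to $\tau$, nor that $\tau$ is a stopping time), and this is exactly the issue the paper sidesteps by citing elsewhere and then handles ad hoc in the bounded-domain section by stopping before $h$ vanishes. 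One further point you gloss over: since $X$ is only assumed to be a \emph{weak} solution of \eqref{sde1}, identifying the law of $Y$ under the new measure with the law of $X$ requires uniqueness in law for that SDE (or else the identity \eqref{change} should be read as defining the law of $X$); without that remark the final equality $\mathbb{E}^{\mathbb{P}}_{t,x}[f(X_\tau)]=\mathbb{E}^{\mathbb{Q}}_{t,x}[M_\tau f(Y_\tau)]$ is not forced. With those two caveats made explicit (or with a localization $\tau_n\uparrow\tau$ plus uniform integrability, as you suggest), the argument is complete and is the right one.
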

\begin{proof} For the proof, see Theorem 2.10 in Hernandez-del-Valle (2011). 
\end{proof}
We recall Ito's lemma in the case in which the drift is a deterministic function of time.
\begin{lemma}\label{lema} Let $f(\cdot)$ be a real-valued differentiable function, $h$ a solution of the one-dimensional backward heat equation,  and processes $Z$  and $S$ have the following dynamics
\begin{eqnarray*}
dZ_t&=&f'(t)dt+dB_t\\
dS_t&=&-f'(t)S_tdB_t
\end{eqnarray*}
for $0\leq t<\infty$, under some meaure $\mathbb{Q}$. Then
\begin{eqnarray*}
S_\cdot\cdot h(\cdot, Z_\cdot)
\end{eqnarray*}
is a $\mathbb{Q}$-martingale.
\end{lemma}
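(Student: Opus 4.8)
The plan is to apply Itô's product rule to $M_t := S_t\,h(t,Z_t)$ and show that its finite-variation part vanishes identically, so that $M$ reduces to a stochastic integral against $B$. The cancellation hinges on two ingredients acting together: the backward heat equation obeyed by $h$, which kills the drift coming from differentiating $h(t,Z_t)$, and the quadratic covariation of $S$ with $h(\cdot,Z_\cdot)$, which absorbs the leftover drift produced by the drift $f'(t)$ of $Z$.

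Concretely, I would first expand $d\,h(t,Z_t)$ by Itô's formula. Because $dZ_t=f'(t)\,dt+dB_t$ has unit quadratic variation, this yields the drift $h_t+f'(t)h_x+\tfrac12 h_{xx}$ together with the martingale part $h_x\,dB_t$; invoking $h_t+\tfrac12 h_{xx}=0$ collapses the drift to $f'(t)h_x\,dt$. From $dS_t=-f'(t)S_t\,dB_t$, the covariation term is $d\langle S,h(\cdot,Z_\cdot)\rangle_t=-f'(t)S_t h_x\,dt$. Assembling $dM_t=S_t\,d\,h(t,Z_t)+h(t,Z_t)\,dS_t+d\langle S,h(\cdot,Z_\cdot)\rangle_t$, the drift from the first term, $S_t f'(t)h_x\,dt$, is exactly annihilated by the covariation term, while $dS_t$ contributes no drift. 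Hence
\begin{eqnarray*}
dM_t=S_t\bigl(h_x(t,Z_t)-f'(t)\,h(t,Z_t)\bigr)\,dB_t,
\end{eqnarray*}
so $M$ is a local martingale.

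There is an illuminating way to see this and, moreover, to obtain the genuine (not merely local) martingale property: $S$ is the Dol\'eans--Dade exponential $S_t=\exp\!\bigl(-\int_0^t f'\,dB-\tfrac12\int_0^t(f')^2\,du\bigr)$, the density process of a measure $\tilde{\mathbb{Q}}$ under which, by Girsanov, $Z$ is a standard Brownian motion. Under $\tilde{\mathbb{Q}}$ the process $h(t,Z_t)$ is a martingale precisely because $h$ solves the backward heat equation; translating back to $\mathbb{Q}$ via the Bayes rule for conditional expectations shows that $S_t h(t,Z_t)$ is a $\mathbb{Q}$-martingale. This viewpoint also explains the lemma's role: it is the infinitesimal counterpart of the $h$-transform of Theorem \ref{thm1}.

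I expect the main obstacle to be the passage from local to true martingale, i.e.\ justifying either the integrability $\mathbb{E}^{\mathbb{Q}}\!\int_0^t S_u^2\,(h_x-f'h)^2\,du<\infty$ in the direct approach, or the validity of the Girsanov change of measure in the conceptual one. Both come down to growth and integrability hypotheses on $f'$ and on $h,h_x$ that are implicit in the setting; where these are not automatic I would supplement the argument with a standard localization by stopping times.
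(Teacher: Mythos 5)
Your main computation is exactly the paper's proof: apply It\^o's product rule to $S_t\,h(t,Z_t)$, use $h_t+\tfrac12 h_{xx}=0$ to reduce the drift of $h(t,Z_t)$ to $f'(t)h_x\,dt$, and observe that this is cancelled by the covariation $d\langle S,h(\cdot,Z_\cdot)\rangle_t=-f'(t)S_t h_x\,dt$, leaving $dM_t=S_t\bigl(h_x(t,Z_t)-f'(t)h(t,Z_t)\bigr)dB_t$ (the paper's final line, up to two typos where it writes $f(t,Z_t)$ for $h(t,Z_t)$ and $X_t$ for $S_t$). Your Girsanov/Dol\'eans--Dade reading and your remark that the computation only yields a local martingale without further integrability hypotheses are both correct and go beyond the paper, which stops at the stochastic-integral representation and asserts the martingale property without addressing that gap.
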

\begin{proof} From Ito's lemma 
\begin{eqnarray*}
dh(t,Z_t)&=&h_t(t,Z_t)dt+h_z(t,Z_t)f'(t)dt\\
&&+h_z(t,Z_t)dY_t+\frac{1}{2}h_{zz}(t,Z_t)dt\\
&=&h_z(t,Z_t)f'(t)dt+h_z(t,Z_t)dY_t.
\end{eqnarray*}
Hence,
\begin{eqnarray*}
d\left[S_t\cdot h(t,Z_t)\right]&=&f(t,Z_t)dS_t+S_tdh(t,Z_t)\\
&&+dh(t,Z_t)\cdot dS_t\\
&=&h(t,Z_t)\left[-f'(t)S_tdY_t\right]\\
&&+S_th_z(t,Z_t)f'(t)dt+S_th_z(t,Z_t)dY_t\\
&&-h_z(t,Z_t)S_tf'(t)dt\\
&=&\left[S_th_z(t,Z_t)-f'(t)X_th(t,Z_t)\right]dY_t.
\end{eqnarray*}
\end{proof}

\section{Unbounded state space}\label{sec2}
\begin{remark}\label{rp1}
Henceforth let $h$ be a solution to the one-dimensional backward heat equation (\ref{heatp}). Let  process $X$ have the following dynamics
\begin{eqnarray}\label{process}
dX_t=\frac{h_x(t,X_t)}{h(t,X_t)}dt+dB_t,\qquad X_0= y\in\left\{\begin{array}{ll}\mathbb{R}& \hbox{or}\\ \mathbb{R}^+\end{array}\right. ,
 \end{eqnarray}
for $0\leq t<\infty$. And assume the drift $h_x/h$ satisfies the Ito conditions.
 \end{remark}
 
 In this section, we specialize to the case in which the process $X$, with dynamics as in (\ref{process}), has unbounded state space before absorption.  In particular we find the density of the first time that $X$ hits a real-valued and twice continuously differentiable function $f$. To this end let us first define the following stopping times.
 \begin{definition}
 Given the constant $a\in\mathbb{R}$ and the real-valued, twice continuously  differentiable function $f(\cdot)$---which we refer to as a ``mo\-ving boun\-dary''---we define the following {\it stopping times\/}
\begin{eqnarray}\label{stop}
T&:=&\inf\left\{t\geq 0|X_t=a+\int_0^tf'(u)du\right\}\qquad y\not =a\\
\nonumber T^B&:=&\inf\left\{t\geq 0|B_t=a+\int_0^tf'(u)du\right\}.
\end{eqnarray}
Furthermore, let  $p^B_f(\cdot)$ be the density of $T^B$, that is, the density of the first time that a one-dimensional Wiener process reaches a deterministic moving boundary $f$. For detailed historical and technical account (making use of integral equations) of this problem see Peskir (2001).
\end{definition}
  
Next, we present the main result of this section.
 \begin{theorem}\label{thm2} Suppose that $X$ has dynamics as in (\ref{process}), $T$ is as in (\ref{stop}), and $a\not=y$. Then 
\begin{eqnarray*}
\mathbb{P}_y(T\in du)=\frac{h\left(u,a+\int_0^uf'(v)dv\right)}{h(0,y)}p^B_f(u)du,
\end{eqnarray*}
for $u\geq 0$.
\end{theorem}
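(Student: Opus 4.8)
The plan is to read the theorem as a first--passage version of the Doob $h$--transform of Theorem \ref{thm1}. The key structural remark is that $\mathbb{P}$ and $\mathbb{Q}$ are carried by the same path space and are linked through the density process $M_t:=h(t,Y_t)/h(0,y)$; by Ito's lemma together with the backward heat equation (\ref{heatp}), $M$ is a local $\mathbb{Q}$--martingale, which is precisely the mechanism behind Theorem \ref{thm1}. Consequently the first time the path meets the moving boundary $a+\int_0^t f'(v)\,dv$ is a single functional of the trajectory, equal to $T$ when the canonical process is read under $\mathbb{P}$ and to $T^B$ when it is read under $\mathbb{Q}$.

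First I would fix an arbitrary bounded measurable $g$ and apply the change of measure underlying (\ref{change}) --- not to a terminal--value functional $f(Y_\tau)$ but to the $\mathcal{F}_T$--measurable functional $g(T)$ --- to obtain
\[
\mathbb{E}^{\mathbb{P}}_{0,y}\bigl[g(T)\bigr]=\mathbb{E}^{\mathbb{Q}}_{0,y}\!\left[\frac{h\bigl(T^B,Y_{T^B}\bigr)}{h(0,y)}\,g\bigl(T^B\bigr)\right].
\]
Second, I would invoke path continuity: at the first passage time one has $Y_{T^B}=a+\int_0^{T^B}f'(v)\,dv$ almost surely, so the random weight $h(T^B,Y_{T^B})$ collapses to the \emph{deterministic} function $u\mapsto h\bigl(u,a+\int_0^u f'(v)\,dv\bigr)$ evaluated at the hitting time. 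Pulling this factor outside and recalling that under $\mathbb{Q}$ the time $T^B$ has density $p^B_f$ yields
\[
\mathbb{E}^{\mathbb{P}}_{0,y}\bigl[g(T)\bigr]=\int_0^\infty g(u)\,\frac{h\bigl(u,a+\int_0^u f'(v)\,dv\bigr)}{h(0,y)}\,p^B_f(u)\,du.
\]
Since $g$ is arbitrary, reading off the density gives precisely the asserted expression for $\mathbb{P}_y(T\in du)$.

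The step I expect to be the main obstacle is the passage from a fixed horizon, where (\ref{change}) is granted, to the random time $T$. This is legitimate only if $\{M_{t\wedge T}\}_{t\ge 0}$ is a uniformly integrable $\mathbb{Q}$--martingale, i.e.\ if no probability mass is lost before the boundary is reached. I would handle it by the usual localization: apply (\ref{change}) at the bounded stopping times $T\wedge n$, where the stopped density process is a genuine martingale and the identity is immediate, and then let $n\to\infty$ by dominated convergence. The domination --- control of $M$ up to $T$ --- is exactly what the hypothesis of this section, that $X$ has \emph{unbounded} state space before absorption, is there to provide; in the two--sided situation of Section \ref{sec3} this control fails and the argument must be reorganized. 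This is, I believe, the ``reason as to why this distinction has to be made'' flagged in the abstract.
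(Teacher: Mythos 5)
Your argument is correct and follows essentially the same route as the paper: apply the $h$-transform of Theorem \ref{thm1} to the indicator of $\{T<t\}$, use optional sampling to evaluate the weight $h(\cdot,Y_\cdot)/h(0,y)$ at the hitting time, where path continuity pins $Y_{T^B}$ to the boundary value $a+\int_0^{T^B}f'(v)\,dv$, and then integrate against the known density $p^B_f$ of $T^B$. The only difference is presentational: the paper routes the optional-sampling step through a Girsanov change of measure and the explicit martingale of Lemma \ref{lema}, whereas you invoke the $\mathbb{Q}$-martingale property of $h(t,Y_t)$ directly and justify the passage to the random time by localization at $T\wedge n$, which is an equivalent (and arguably cleaner) way of organizing the same computation.
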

\begin{proof} For $T$  as in (\ref{stop}), it follows from Theorem \ref{thm1} that
\begin{eqnarray*}
\mathbb{P}_y(T<t)&=&\mathbb{E}_y\left[\mathbb{I}_{(T<t)}\right]\\
&=&\mathbb{E}_y^{\mathbb{Q}}\left[\frac{h(t,Y_t)}{h(0,Y_0)}\mathbb{I}_{(T<t)}\right].
\end{eqnarray*} 
From Girsanov's theorem, and given that $\tilde{Y}$ is a $\tilde{\mathbb{Q}}$-Wiener process we have
\begin{eqnarray*}
\phantom{xxxxx}&=&\mathbb{E}^{\tilde{\mathbb{Q}}}_y\left[e^{-\int_0^tf'(u)d\tilde{Y}_u-\frac{1}{2}\int_0^u(f'(u))^2du}\frac{h(t,\tilde{Y}_t+\int_0^tf'(u)du)}{h(0,y)}\mathbb{I}_{(T<t)}\right].
\end{eqnarray*}
Finally, from Lemma \ref{lema} and the optional sampling theorem
\begin{eqnarray*}
\phantom{xxxxx}&=&\mathbb{E}^{\mathbb{Q}}_y\left[e^{-\int_0^Tf'(u)d\tilde{Y}_u-\frac{1}{2}\int_0^T(f'(u))^2du}\frac{h(T,a+\int_0^Tf'(u)du)}{h(0,y)}\mathbb{I}_{(T<t)}\right]\\
&=&\int_0^t \frac{h\left(u,a+\int_0^uf'(v)dv\right)}{h(0,y)}p^B_f(u)du.
\end{eqnarray*}
\end{proof}
Examples of processes which satisfy equation (\ref{process}) and that have unbounded domain before hitting the boundary $f$ are:
  \begin{examples}\label{exj} 
\begin{enumerate}
\item[(i)] Standard Brownian motion, where $h(t,x)=c$.
\item[(ii)] Brownian motion with linear drift, where
\begin{eqnarray*}
h(t,x)=e^{\pm \lambda x+\frac{1}{2}\lambda(s-t)},\qquad (t,x)\in\mathbb{R}^+\times \mathbb{R}.
\end{eqnarray*}
\item[(iii)] Brownian bridge, where
\begin{eqnarray*}
h(t,x)=\frac{1}{\sqrt{2\pi(s-t)}}e^{-\frac{x^2}{2(s-t)}},\qquad(t,x)\in\mathbb[0,s]\times\mathbb{R}.
\end{eqnarray*}
\item[(iv)] 3D Bessel process, where $h(t,x)=x$.
\item[(v)] 3D Bessel bridge, where
\begin{eqnarray*}
h(t,x)&=&\frac{x}{\sqrt{2\pi(s-t)^3}}e^{-\frac{x}{2(s-t)}},\qquad (t,x)\in[0,s]\times\mathbb{R}^+.
\end{eqnarray*}
\end{enumerate}
\end{examples}

\begin{example}\label{exa1} In Figures \ref{fig1} and \ref{fig2} the theoretical density and distribution of the first time that a Brownian bridge started at $y=1$, absorbed at time $s=3$ at level $c=0$ hits the linear barrier
\begin{eqnarray*}
f(t)= 2-t,\qquad t\geq 0
\end{eqnarray*}
is compared with $n=5500$ simulations. See Durbin and Williams (1992).  
\end{example}
\begin{example}\label{exa3} In Figures \ref{fig1} and \ref{fig2}, the theoretical densities and distributions of the first time that a 3-D  Bessel process started at $y=3$ and absorbed at time $s=4$, reaches level $a=1$. For a general overview of the 3D Bessel bridge, see Revuz and Yor (2005).  The case in which a level is reached from below is in general studied in Pitman and Yor (1999) or Hernandez-del-Valle (2012).
\end{example}
\section{Bounded domain}\label{sec3}
In this section we will also also use the following:\\[0.3cm]
\indent Given that $B$ is a one-dimensional Wiener process started at $y$ and 
\begin{eqnarray*}
T_0&:=&\inf\left\{t\geq 0|B_t=0\right\}\\
T&:=&\inf\left\{t\geq 0|B_t=a\right\},\quad 0<y<a.
\end{eqnarray*}
We recall
\begin{eqnarray*}
\mathbb{P}_y(T\wedge T_0\in dt)&:=&\frac{1}{\sqrt{2\pi t^3}}\sum\limits_{n=-\infty}^\infty\Bigg{[}(2na+y)\exp\left\{-\frac{(2na+y)^2}{2t}\right\}\\
&&+(2na+a-y)\exp\left\{-\frac{(2na+a-y)^2}{2t}\right\}\Bigg{]}dt,\\
\nonumber\mathbb{P}_{t,y}(T\in s,T_0>s)
&:=&\frac{1}{\sqrt{2\pi (s-t)^3}}\sum\limits_{n=-\infty}^\infty\Bigg{[}(2na+a-y)\\
&&\qquad\qquad\qquad\times\exp\left\{-\frac{(2na+a-y)^2}{2(s-t)}\right\}\Bigg{]},\\
\mathbb{P}_y(T_0\in dt)&:=&\frac{y}{\sqrt{2\pi t^3}}\exp\left\{-\frac{y^2}{2t}\right\}dt.
\end{eqnarray*} 
See for instance Chapter 2, Section 8 in Karatzas and Shreve (1991).\\[0.3cm]
\indent In this section, we specialize to the case in which the process $X$, with dynamics as in (\ref{process}), has bounded state space before absorption. The main result of this section is the following.
\begin{theorem}
Given that $X$ has dynamics as in (\ref{process}) and $T$ is as in (\ref{stop}). We have for $0\leq t\leq s$ and $0<y\leq a$ that \begin{eqnarray*}
\mathbb{P}_y(T\in du)=\frac{h\left(u,a\right)}{h(0,y)}\left[\mathbb{P}_y(T\wedge T_0\in du)-\mathbb{P}_y(T_0\in du)\right],
\end{eqnarray*}
where $0\leq u\leq s$.
\end{theorem}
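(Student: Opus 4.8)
The plan is to run the same change-of-measure argument as in the proof of Theorem \ref{thm2}, but now keeping track of the absorbing level $0$. Writing $Y$ for the $\mathbb{Q}$-Brownian motion of Theorem \ref{thm1} started at $y$, I would first apply the $h$-transform (\ref{change}) with the stopping time $T$ of (\ref{stop}) in place of $\tau$, so that
\[
\mathbb{P}_y(T<t)=\mathbb{E}^{\mathbb{Q}}_y\left[\frac{h(T,Y_T)}{h(0,y)}\,\mathbb{I}_{(T<t)}\right].
\]
For the fixed barrier $a$ (the case $f'\equiv 0$, which is what is consistent both with the recalled interval densities and with the factor $h(u,a)$ in the statement) one has $Y_T=a$, so the weight collapses to $h(T,a)/h(0,y)$ and can be pulled outside under the time integral. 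Differentiating in $t$ then yields $\mathbb{P}_y(T\in du)=\frac{h(u,a)}{h(0,y)}\,\nu(du)$, where $\nu$ is the $\mathbb{Q}$-law of the first instant at which $Y$ reaches $a$ while still alive.

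The place where the bounded case departs from Theorem \ref{thm2} is the meaning of ``still alive.'' Because the state space is bounded below by the absorbing level $0$, the martingale $t\mapsto h(t,Y_t)$ implementing the transform degenerates there: for the examples of (\ref{process}) with bounded domain, such as the $3$D Bessel bridge, $h(t,\cdot)$ vanishes at $0$. Hence the change of measure assigns zero weight to Brownian paths absorbed at $0$, and $\nu$ must be the density of the event that $Y$ reaches $a$ \emph{before} it is killed at $0$, rather than the unconstrained first-passage density $p^B_f$ that occurred in the unbounded case. Here I would invoke Lemma \ref{lema} together with optional sampling at $T$, exactly as in Theorem \ref{thm2}, the only new ingredient being that the stopping is carried out for the Brownian motion localized at $T\wedge T_0$.

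It then remains to express $\nu$ through the three interval densities recalled at the start of the section. I would begin from the decomposition of the first exit of $Y$ from $(0,a)$, namely $\mathbb{P}_y(T\wedge T_0\in du)=\nu(du)+\mu_0(du)$, where $\mu_0$ is the density of leaving $(0,a)$ through the lower endpoint. Solving for $\nu$ and identifying $\mu_0$ with the recalled first-passage density $\mathbb{P}_y(T_0\in du)$ produces the bracket $\mathbb{P}_y(T\wedge T_0\in du)-\mathbb{P}_y(T_0\in du)$, and substituting into the displayed identity gives the claimed formula for $0\le u\le s$ and $0<y\le a$.

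The main obstacle, and the step on which I would spend the most care, is exactly this identification of $\mu_0$. The exit-through-$0$ density records only paths that reach $0$ before $a$, whereas the recalled $\mathbb{P}_y(T_0\in du)$ is the first-passage density of $Y$ to $0$ on the whole half-line, which also counts paths that overshoot $a$ first; these two are not a priori equal. I would therefore have to verify, directly from the reflection-series expressions recalled above, that the subtraction $\mathbb{P}_y(T\wedge T_0\in du)-\mathbb{P}_y(T_0\in du)$ reproduces precisely the exit-through-$a$ series $\frac{1}{\sqrt{2\pi u^3}}\sum_n(2na+a-y)\exp\{-(2na+a-y)^2/(2u)\}$, checking the matching of indices and the signs of the image terms so that no spurious contribution survives. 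A secondary technical point is the Girsanov and optional-sampling passage at the random time $T$, where the exponential martingale can vanish on absorbed trajectories; this I would handle by localizing at $T\wedge T_0$ and passing to the limit, precisely as in the proof of Theorem \ref{thm2}.
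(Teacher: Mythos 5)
Your opening moves --- applying the $h$-transform, observing that $h$ vanishes at the absorbing level so that killed paths carry zero weight, and collapsing the weight to $h(T,a)/h(0,y)$ by optional sampling --- track the paper's proof of this theorem. The divergence, and the genuine gap, is exactly the step you flagged as the main obstacle: the identification of the exit-through-$0$ density $\mu_0$ with the free first-passage density $\mathbb{P}_y(T_0\in du)$. This identification is false, and the series verification you propose would not go through. From the formulas recalled at the start of the section, the density of leaving $(0,a)$ through $0$ is $\frac{1}{\sqrt{2\pi u^3}}\sum_{n}(2na+y)\exp\{-(2na+y)^2/(2u)\}\,du$, of which $\mathbb{P}_y(T_0\in du)$ is only the $n=0$ term; the image terms with $n\neq 0$ do not cancel in $\pm n$ pairs, since $z\mapsto z e^{-z^2/(2u)}$ takes different values at $2na+y$ and $2na-y$. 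Consequently $\mathbb{P}_y(T\wedge T_0\in du)-\mathbb{P}_y(T_0\in du)=\nu(du)-\mathbb{P}_y(T_0\in du,\ T_0>T)$, which is \emph{not} your $\nu(du)$. Carried out correctly, your argument terminates at $\frac{h(u,a)}{h(0,y)}\nu(du)$ and cannot reach the bracket in the statement by the route you describe.

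The paper's last step is different and avoids any density-level decomposition of the exit measure. It keeps the constraint ``still alive at the observation time $t$'' rather than ``alive at the hitting time'': it writes $\mathbb{P}_y(T<t)=\mathbb{E}^{\mathbb{Q}}_y\left[\frac{h(T,a)}{h(0,y)}\mathbb{I}_{(T<t,\,T_0>t)}\right]$ and then invokes the exact set identity $\mathbb{P}_y(T<t,T_0>t)=\mathbb{P}_y(T\wedge T_0<t)-\mathbb{P}_y(T_0<t)$, so the bracket in the theorem is the signed measure $d_u\mathbb{P}_y(T<u,T_0>u)$, not the exit-through-$a$ law. Note that this is precisely the source of the mismatch: $d_u\mathbb{P}_y(T<u,T_0>u)=\nu(du)-\mathbb{P}_y(T_0\in du,\ T_0>T)$, which differs from what your optional-sampling computation produces by the (generally nonzero) term $\mathbb{P}_y(T_0\in du,\ T_0>T)$. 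So this is not a bookkeeping issue you can patch inside your framework: either you adopt the paper's event $\{T<t,\,T_0>t\}$ and its distribution-function identity from the outset, or you accept that your (arguably more careful) localization at $T\wedge T_0$ yields $\frac{h(u,a)}{h(0,y)}\nu(du)$ --- a formula that, unlike the stated one, has total mass $1$ in the test case $h(t,x)=x$ of the 3D Bessel process. That discrepancy is worth raising explicitly rather than attempting to verify an identity between the two brackets that does not hold.
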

\begin{proof}  We follow the proof of Theorem \ref{thm2}. However, we must take into account the fact that the $\mathbb{Q}$-Wiener process $Y$ is absorbed at zero. Given that $T$ is as in (\ref{stop})
\begin{eqnarray*}
\mathbb{P}_y(T<t)&=&\mathbb{E}_y\left[\mathbb{I}_{(T<t)}\mathbb{I}_{(T_0>t)}\right]\\
&=&\mathbb{E}^{\mathbb{Q}}_y\left[\frac{h(t,Y_t)}{h(0,Y_0)}\mathbb{I}_{(T<t,T_0>t)}\right]\\
&=&\mathbb{E}^{\mathbb{Q}}_y\left[\frac{h(T,a)}{h(0,Y_0)}\mathbb{I}_{(T<t,T_0>t)}\right],\\
\end{eqnarray*}
where the last line follows from the optional sampling theorem. Finally recall the identity
\begin{eqnarray*}
\mathbb{P}_y(T<t,T_0>t)&=&\mathbb{P}_y(T_0>t)-\mathbb{P}_y(T>t,T_0>t)\\
&=&\mathbb{P}_y(T_0>t)-\mathbb{P}_y(T\wedge T_0>t)\\
&=&\mathbb{P}_y(T\wedge T_0<t)-\mathbb{P}_y(T_0<t).
\end{eqnarray*}
\end{proof}
\begin{examples}\label{exx1} Some examples are: (i) 3D Bessel process (reaching a fixed level from below) (ii) 3D Bessel bridge (reaching a fixed level from below). Regarding the first hitting probabilities of general Bessel processes see Wendel (1980), 
or Betz and Gzyl (1994a, 1994b).
\end{examples}
\begin{example}\label{exa8} In Figure \ref{fig3} the theoretical distribution of the first time that a 3D Bessel process reaches $a=1.5$ from below, is compared with a simulation $n=5500$ (see hard line).
\end{example} 
\begin{example}
If we set
\begin{eqnarray*}
h^a(s-t,x):=\mathbb{P}_{t,x}(T\in s,T_0>s),
\end{eqnarray*}
and define a process $\tilde{Y}$ to be as in
\begin{eqnarray*}
d\tilde{Y}_t&=&\frac{h^a_y(s-t,\tilde{Y}_t)}{h^a(s-t,\tilde{Y}_t)}dt+dB_t,\quad 0<t<s\\
\nonumber\tilde{Y}_s&=&a.
\end{eqnarray*}
One can show that this process has state space $(0,a)$ for  $t\in[0,s)$ [see Hernandez-del-Valle (2011)]. In Figure \ref{fig8} the density and distribution of the Wiener process, started at $x=1/2$, absorbed at zero and that reaches level $a=2$ for the first time at $s=2$ is plotted at $t=1$ and $t=7/8$.
\end{example}
\section{Heat polynomials and Burgers equation}\label{secbur}
In this section we provide a characterization of the results described in Sections \ref{sec2} and \ref{sec3}.

To this end let us first introduce the following classification of SDEs.
\begin{definition} We will say that process $X$, which satisfies the following equation
\begin{eqnarray*}
dX_t=\mu(t,X_t)dt+dB_t,
\end{eqnarray*}
is of class $\mathcal{B}^n$, $n=1,2,\dots$, if its local drift $\mu$ can be expressed as
\begin{eqnarray}\label{burger}
\mu(t,x)=\sum\limits_{j=1}^n\frac{h^j_x(t,x)}{h^j(t,x)}.
\end{eqnarray}
Where each $h^j$ is a solution to the backward heat equation (\ref{heatp}).
\end{definition}
Making use of this classification it follows that:
\begin{remark}  Processes $X_1$, $X_2$, and $X_3$ which respectively satisfy the following equations
\begin{eqnarray}\label{prx}
\left\{\begin{array}{ll} (\mathbb{P}^1) & dX_1(t)=\frac{1}{X_1(t)}dt+dB_t\\
(\mathbb{P}^2) & dX_2(t)=-\frac{X_2(t)}{s-t}dt+dB_t\\
(\mathbb{P}^3) & dX_3(t)=\left[\frac{1}{X_3(t)}-\frac{X_3(t)}{s-t}\right]dt+dB_t
\end{array}\right.
\end{eqnarray}
are of class $\mathcal{B}^1$. That is, the 3-D Bessel process $X_1$, the Brownian bridge $X_2$, and the 3-D Bessel bridge have a local drift which is a solution to Burgers equation. This statement is verified by using the following solutions to the heat equation correspondingly
\begin{eqnarray}\label{drifts}
\left\{\begin{array}{l} k(t,x)=x,\quad g(t,x)=\frac{1}{\sqrt{2\pi(s-t)}}\exp\left\{-\frac{x^2}{2(s-t)}\right\}\\
h(t,x)=\frac{x}{\sqrt{2\pi(s-t)^3}}\exp\left\{-\frac{x^2}{2(s-t)}\right\},
\end{array}\right.
\end{eqnarray}
together with the Cole-Hopf transform which relates Burgers equation with the heat equation.
\end{remark}
Examples of processes which are not $\mathcal{B}^1$ are the following.
\begin{example} The Bessel process of odd order $2n+1$, $n=1,2,\dots$ is of class $\mathcal{B}^n$.

Recall that the Bessel process of order $m\in\mathbb{N}$ is the solution to 
\begin{eqnarray*}
dX_t=\frac{m-1}{2X_t}dt+dB_t,
\end{eqnarray*}
if $m=2n+1$
\begin{eqnarray*}
dX_t&=&\frac{(2n+1)-1}{2X_t}dt+dB_t\\
&=&\frac{n}{X_t}dt+dB_t\\
&=&\left[\frac{k_x(t,X_t)}{k(t,X_t)}+\cdots+\frac{k_x(t,X_t)}{k(t,X_t)}\right]dt+dB_t,
\end{eqnarray*}
where $k$ is as in (\ref{drifts}).
\end{example}
However there exists a least one important  process which is both $\mathcal{B}^1$ and $\mathcal{B}^2$. 
\begin{proposition} The 3-D Bessel bridge process $X_3$, which solves (\ref{prx}.$\mathbb{P}^3$) is $\mathcal{B}^1$ and $\mathcal{B}^2$.
\end{proposition}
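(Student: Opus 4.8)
The plan is to prove both membership claims directly, by exhibiting explicit solutions of the backward heat equation (\ref{heatp}) and checking in each case that the prescribed logarithmic derivatives reproduce the drift of $X_3$. Recall from (\ref{prx}) that the local drift of $X_3$ is $\mu(t,x)=\frac{1}{x}-\frac{x}{s-t}$, so the whole task reduces to realizing this single $\mu$ once as one log-derivative (for the $\mathcal{B}^1$ claim) and once as a sum of two log-derivatives (for the $\mathcal{B}^2$ claim), using heat-equation solutions already listed in (\ref{drifts}).

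For the $\mathcal{B}^1$ claim I would take the single function $h$ from (\ref{drifts}), namely $h(t,x)=\frac{x}{\sqrt{2\pi(s-t)^3}}\exp\{-x^2/(2(s-t))\}$. First I would confirm that $h$ solves (\ref{heatp}); the cleanest route is to observe that $h=-g_x$, where $g$ is the Gaussian kernel below, and that $x$-derivatives of solutions of (\ref{heatp}) are again solutions (differentiate $-h_t=\frac12 h_{xx}$ in $x$). A one-line computation of $\partial_x\log h$ then gives $\frac{h_x}{h}=\frac{1}{x}-\frac{x}{s-t}=\mu$, so $X_3\in\mathcal{B}^1$. For the $\mathcal{B}^2$ claim I would use the pair $k(t,x)=x$ and $g(t,x)=\frac{1}{\sqrt{2\pi(s-t)}}\exp\{-x^2/(2(s-t))\}$ from (\ref{drifts}). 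Each solves (\ref{heatp}): for $k$ both sides vanish identically, while $g$ is the time-reversed heat kernel. The two logarithmic derivatives are $\frac{k_x}{k}=\frac{1}{x}$ and $\frac{g_x}{g}=-\frac{x}{s-t}$, whose sum is again $\mu$, establishing $X_3\in\mathcal{B}^2$.

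I do not expect a genuine obstacle here, since the content is entirely verification; the only point worth flagging is the structural reason the two representations coexist. Observe that $h=\frac{1}{s-t}\,k\,g$, so $h$ differs from the product $kg$ only by the $x$-independent factor $(s-t)^{-1}$. Because $\partial_x\log$ annihilates $x$-independent factors and turns products into sums, one gets $\frac{h_x}{h}=\frac{k_x}{k}+\frac{g_x}{g}$, which is exactly why the one-solution drift and the two-solution drift agree. It is worth emphasizing that $kg$ by itself is \emph{not} a solution of (\ref{heatp})---products of solutions need not be---but the normalizing factor $(s-t)^{-1}$ repairs this, and it is precisely this repair that permits the same drift $\mu$ to be read off in both the $\mathcal{B}^1$ and the $\mathcal{B}^2$ form.
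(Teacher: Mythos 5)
Your proposal is correct and follows essentially the same route as the paper, which proves the proposition by verifying the identity $h_x/h=k_x/k+g_x/g$ for the explicit functions $k$, $g$, $h$ of (\ref{drifts}); you simply carry out the verification in more detail and add the (accurate) structural observation that $h=(s-t)^{-1}kg$ explains why the two representations coincide.
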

\begin{proof} it follows by verifying that for $k$, $g$, and $h$ as in (\ref{drifts}) the following identity holds
\begin{eqnarray*}
\frac{h_x}{h}=\frac{k_x}{k}+\frac{g_x}{g}.
\end{eqnarray*}
\end{proof}

In turn, this feature of process $X_3$ [{\it i.e.\/} a process that is both $\mathcal{B}^1$ and $\mathcal{B}^2$], leads to some interesting properties. [See Hernandez-del-Valle (2011).] The next natural question to ask is if there are more processes with such property.

In this direction we will show that process $X_3$ is the only element of both $\mathcal{B}^1$ and $\mathcal{B}^2$. In the case in which the generating functions $w$, of class $\mathcal{B}^1$, are heat polynomials. [That is if $w$ is a solution to the heat equation and $w_x/w$ models the local drift of a process $X$]

\begin{definition}\label{defineheat} (Heat polynomials) [Widder and Rosenbloom (1959)]. A heat polynomial $v_j(x,t)$ of degree $j$ is defined as the coefficient of $z^n/n!$ in the power series expansion
\begin{eqnarray}\label{wheat}
e^{xz+\frac{1}{2}z^2t}=\sum\limits_{n=0}^\infty v_n(x,t)\frac{z^n}{n!}.
\end{eqnarray}
An associated function $w_n(x,t)$ is defined as
\begin{eqnarray}\label{wdheat}
w_n(x,t)=g(t,x)v_n\left(x,-t\right)(t/2)^{-n},
\end{eqnarray}
where $g$, as in (\ref{drifts}), is the fundamental solution to the heat equation.
\end{definition}
\begin{remark} Observe that if processes $X_1$, $X_2$, and $X_3$ are as in (\ref{prx}) then their local drifts can be described in terms of heat polynomials, Definition \ref{defineheat}. In the case of $X_1$ its corresponding polynomial is $v_1$. Alternatively for $X_2$ and $X_3$ their corresponding polynomials are $w_1$ and $w_2$ respectively.
\end{remark}
The main result of this section is the following.
\begin{theorem} If process $X\in\mathcal{B}^1$. [That is $X$ is  a solution to
\begin{eqnarray*}
dX_t=\frac{h_x(t,X_t)}{h(t,X_t)}dt+dB_t
\end{eqnarray*}
and $h$ solves the backward heat equation $-h_t=\frac{1}{2}h_{xx}$.] And $h$ is either a heat (\ref{wheat}) or derived heat (\ref{wdheat}) polynomial. Then the only process which is also $\mathcal{B}^2$ is the 3-D Bessel bridge $X_3$, which has  dynamics as in (\ref{prx}.$\mathbb{P}^3$).
\end{theorem}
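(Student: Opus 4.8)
The plan is to convert the statement into a factorization question for the backward heat equation and then exploit the rigid structure of the resulting rational drifts. Throughout write $\mu=h_x/h$ for the drift of $X$. If $X$ is also $\mathcal{B}^2$, say $\mu=k_x/k+g_x/g$ with $k,g$ two non-constant solutions of (\ref{heatp}), then $(\log h)_x=(\log(kg))_x$, so that $h=C(t)\,k(t,x)\,g(t,x)$ with $C$ a function of $t$ only. Substituting $h=Ckg$ into (\ref{heatp}) and using that $k$ and $g$ each solve (\ref{heatp}), all second-order terms cancel and one is left with the single identity
\begin{eqnarray*}
-\frac{C'(t)}{C(t)}=\frac{k_x(t,x)}{k(t,x)}\cdot\frac{g_x(t,x)}{g(t,x)}.
\end{eqnarray*}
Hence $X\in\mathcal{B}^1\cap\mathcal{B}^2$ precisely when the Burgers solution $\mu$ admits a splitting $\mu=a+b$, with $a=k_x/k$ and $b=g_x/g$ each the logarithmic $x$-derivative of a heat solution, subject to the product $a\cdot b$ being a function of $t$ alone. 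This product condition is the entire content of the theorem, and everything below analyzes it.

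Next I would read off the growth of the admissible factors. For a heat polynomial $v_n$ one has $(\log v_n)_x\sim n/x$, while the Gaussian factor in a derived heat polynomial forces $(\log w_n)_x\sim -x/(s-t)$; more generally the logarithmic derivative of a heat solution either decays like $1/x$ or grows like $x$. Since $a\cdot b$ must be a nonzero function of $t$, two decaying factors give a product that tends to $0$ (hence vanishes identically), and two growing factors give a product of order $x^2$; so exactly one factor decays and one grows. Consequently $\mu=a+b$ itself grows linearly, which rules the pure polynomials $v_n$---and in particular the 3-D Bessel process---out of $\mathcal{B}^2$, and shows that the generator must be of derived (Gaussian) type with the growing factor $b\sim -x/(s-t)$.

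I would then pin down the two factors exactly. As a rational function of $x$, the product $a\cdot b$ is independent of $x$, hence has no poles at all; so the growing factor $b$ can carry no finite zeros of $g$ (they would survive as poles of $a\cdot b$), forcing $g$ to be the zero-free Gaussian of (\ref{drifts}) and $b=-x/(s-t)$ exactly. The product condition then gives $a=p(t)/b\propto 1/x$, say $a=\delta(t)/x$; but $a=k_x/k$ with $k$ a heat solution, so $k\propto x^{\delta(t)}$, and substituting into (\ref{heatp}) forces $\delta$ constant with $\delta(\delta-1)=0$. The only non-degenerate choice is $\delta=1$, giving $k=x$ and $a=1/x$. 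Thus $\mu=1/x-x/(s-t)$, which is exactly the 3-D Bessel bridge $X_3$ of (\ref{prx}.$\mathbb{P}^3$); that this $\mu$ does split in the required way, with $k,g$ as in (\ref{drifts}), is the content of the Proposition already established.

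The step I expect to be the main obstacle is the dichotomy-and-pole analysis of the second and third paragraphs carried out for \emph{arbitrary} (not necessarily polynomial) heat solutions $k,g$: I have described the growth trichotomy and the no-finite-pole conclusion for the algebraic (polynomial and Gaussian-times-polynomial) solutions, but a complete proof must exclude more exotic heat solutions as candidate factors and must justify that a logarithmic derivative growing like $x$ necessarily has the pure-Gaussian leading coefficient $-1/(s-t)$ rather than, say, a growing ``anti-Gaussian'' term (which is eliminated by consistency of the sign of $\mu$). Controlling these general factors---most cleanly by combining the product identity $a\cdot b=p(t)$ with the two Burgers equations to obtain a single equation for $b$ and $p(t)$ and solving it---is where the real work lies.
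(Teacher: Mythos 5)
Your route is genuinely different from the paper's. The paper never forms the product condition: it works entirely inside the Rosenbloom--Widder calculus, using $v_n'=nv_{n-1}$ and $w_{n-1}'=-\tfrac{1}{2}w_n$ to write the canonical decomposition
$w_n'/w_n=w_0'/w_0+v_n'(x,-t)/v_n(x,-t)$,
and then observes that the second summand is the logarithmic derivative of $v_n(x,-t)$, which solves the heat equation with the \emph{wrong} time orientation unless $n=1$ (where $v_1=x$ is $t$-independent and solves both), leaving only $w_1'/w_1=-x/t+1/x$, the Bessel-bridge drift. Your reduction --- from $h_x/h=k_x/k+g_x/g$ to $h=C(t)kg$ and then, by substituting into the backward heat equation, to the single identity $-C'/C=(k_x/k)\cdot(g_x/g)$ --- is correct (the second-order terms do cancel) and is in one respect \emph{stronger} than what the paper does: the paper only tests one natural splitting of $w_n'/w_n$ and does not argue that no other splitting into two backward heat solutions exists, whereas your product condition characterizes all candidate splittings at once. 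It also makes transparent why the Gaussian $\times$ linear factorization is the relevant one.

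The gap is the one you name yourself, and it is real: the classification of admissible factor pairs $(a,b)$ with $a\cdot b=p(t)$ is carried out only heuristically. The asserted dichotomy ``a logarithmic derivative of a heat solution either decays like $1/x$ or grows like $x$'' fails already for the exponential solutions $e^{\pm\lambda x-\lambda^2 t/2}$, whose logarithmic derivatives are the constants $\pm\lambda$; these satisfy your product condition ($a\cdot b=-\lambda^2$) and must be excluded by a separate argument (they force $\mu$ constant, hence $h$ of degree $0$, not a nontrivial heat or derived heat polynomial --- but you have to say this). Likewise the no-pole step is too quick: a simple pole of $b$ at a zero of $g$ could in principle be cancelled by a zero of $a$ at the same (moving) point, so ``$g$ has no finite zeros'' does not follow immediately from ``$a\cdot b$ has no poles.'' The clean repair is the one you gesture at in your last sentence: since $a+b=\mu$ is known and $a\cdot b=p(t)$, the factors are roots of $z^2-\mu z+p(t)=0$, so $a,b=\tfrac{1}{2}\bigl(\mu\pm\sqrt{\mu^2-4p(t)}\bigr)$; imposing that each root be the logarithmic derivative of a backward heat solution then gives an explicit equation for $p(t)$ that can be solved case by case for $h=v_n$ and $h=w_n$. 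Until that step is executed, the argument identifies the Bessel bridge as the expected answer but does not prove uniqueness.
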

\begin{proof} Given that $v_n$ and $w_n$ are as in (\ref{wheat}) and (\ref{wdheat}) respectively; and letting $w'$ stand for differentiation with respect to the first variable we have that
\begin{eqnarray*}
\frac{w'_0(x,t)}{w_0(x,t)}&=&-\frac{x}{t}\quad\hbox{and}\quad\frac{v'_1(x,t)}{v_1(x,t)}=\frac{1}{x}.
\end{eqnarray*}
Next, since [see p. 225 in Rosenbloom and Widder (1959)]
\begin{eqnarray*}
w'_{n-1}(x,t)=-\frac{1}{2}w_n(x,t)
\end{eqnarray*}
it follows that
\begin{eqnarray}\label{hj}
\nonumber\frac{w'_n(x,t)}{w_n(x,t)}&=&-\frac{x}{t}+\frac{2n}{t}\frac{w_{n-1}(x,t)}{w_n(x,t)}\\
\nonumber&=&-\frac{x}{t}-\frac{n}{t}\left[\frac{w_{n-1}(x,t)}{w'_{n-1}(x,t)}\right]\\
&=&\frac{w'_0(x,t)}{w_0(x,t)}-\frac{n}{t}\left[\frac{w_{n-1}(x,t)}{w'_{n-1}(x,t)}\right].
\end{eqnarray}
Alternatively from (\ref{wdheat})
\begin{eqnarray*}
\frac{w_{n-1}}{w'_{n-1}}&=&\frac{g(x,t)v_{n-1}(x,-t)(t/2)^{-n+1}}{-\frac{1}{2}w_n(x,t)}\\
&=&\frac{g(x,t)v_{n-1}(x,-t)(t/2)^{-n+1}}{-\frac{1}{2}g(x,t)v_n(x,-t)(t/2)^{-n}}\\
&=&-\frac{tv_{n-1}(x,-t)}{v_n(x,-t)}.
\end{eqnarray*}
This implies, from (\ref{hj}),  that
\begin{eqnarray}\label{hi}
\nonumber\frac{w'_n(x,t)}{w_n(x,t)}
\nonumber&=&\frac{w'_0(x,t)}{w_0(x,t)}-\frac{n}{t}\left[\frac{w_{n-1}(x,t)}{w'_{n-1}(x,t)}\right]\\
\nonumber&=&\frac{w'_0(x,t)}{w_0(x,t)}-\frac{n}{t}\left[-\frac{tv_{n-1}(x,-t)}{v_n(x,-t)}\right]\\
&=&\frac{w'_0(x,t)}{w_0(x,t)}+n\frac{v_{n-1}(x,-t)}{v_n(x,-t)}.
\end{eqnarray}
However, since
\begin{eqnarray*}
v'_n(x,t)=nv_{n-1}(x,t)
\end{eqnarray*}
[see equation (1.9) in Widder and Rosenbloom (1959)] we have, from (\ref{hi}), that
\begin{eqnarray*}
\nonumber\frac{w'_n(x,t)}{w_n(x,t)}
&=&\frac{w'_0(x,t)}{w_0(x,t)}+\frac{nv_{n-1}(x,-t)}{v_n(x,-t)}\\
&=&\frac{w'_0(x,t)}{w_0(x,t)}+\frac{v'_{n}(x,-t)}{v_n(x,-t)}.
\end{eqnarray*}
In general, since $v_n(x,t)$ is a solution to the backward heat equation, then $v_n(x,-t)$ is a solution to the forward equation. This is true as long as $n>1$. However if $n=1$, $v_1(x,-t)$ is also a solution to the backward equation becuase it does not depend on $t$. In this case we have that
\begin{eqnarray*}
\frac{w'_1(x,t)}{w_1(x,t)}&=&\frac{x}{t}+\frac{1}{x},
\end{eqnarray*}
as claimed.
\end{proof}
%Although in general if $X\in\mathcal{B}^j$ then $X\not\in\mathcal{B}^i$ for $i\not=j$. There is at least one process which is in more than one class [in Hernandez-del-Valle (2011,2011a) it is explained how are $\mathcal{B}^1$ and $\mathcal{B}^2$ related]. 

%\begin{proposition} The 3-D Bessel bridge process $X$, which solves (\ref{prx}.$\mathbb{P}^3$) is $\mathcal{B}^1$ and $\mathcal{B}^2$.
%\end{proposition}
%\begin{proof} it follows by verifying that for $k$, $g$, and $h$ as in (\ref{drifts}) the following identity holds
%\begin{eqnarray*}
%\frac{h_x}{h}=\frac{k_x}{k}+\frac{g_x}{g}.
%\end{eqnarray*}
%\end{proof}
%\begin{remark} The Bessel process of order 5 is $\mathcal{B}^2$ but not $\mathcal{B}^1$. See Hernandez-del-Valle (2011).
%\end{remark}

%Next

\section{Concluding remarks}\label{sec4}
In this work we study processes $X$, which have  local drift modeled in terms of solutions to Burgers equation. In particular, we find the density of the first time that such processes hit a moving boundary. Next, we propose a classification of SDEs in terms of solutions of Burgers equation. We say that process $X$ is of class $\mathcal{B}^j$ if its local drift can be expressed as a sum of $j$ solutions to Burgers equation.

We note that the 3-D Bessel process, the 3-D Bessel bridge, and the Brownian bridge are all $\mathcal{B}^1$. However we show that the 3-D Bessel bridge. Furthermore, we show, that it is the only process which satisfies this property if the solutions of Burgers equation is constructed by use of heat polynomials. A more detailed study of this classification is work in progress.

%\vfill\eject
\begin{figure}
\hspace{-1.3 cm}\includegraphics[scale=.5,height=14cm,width=14cm]{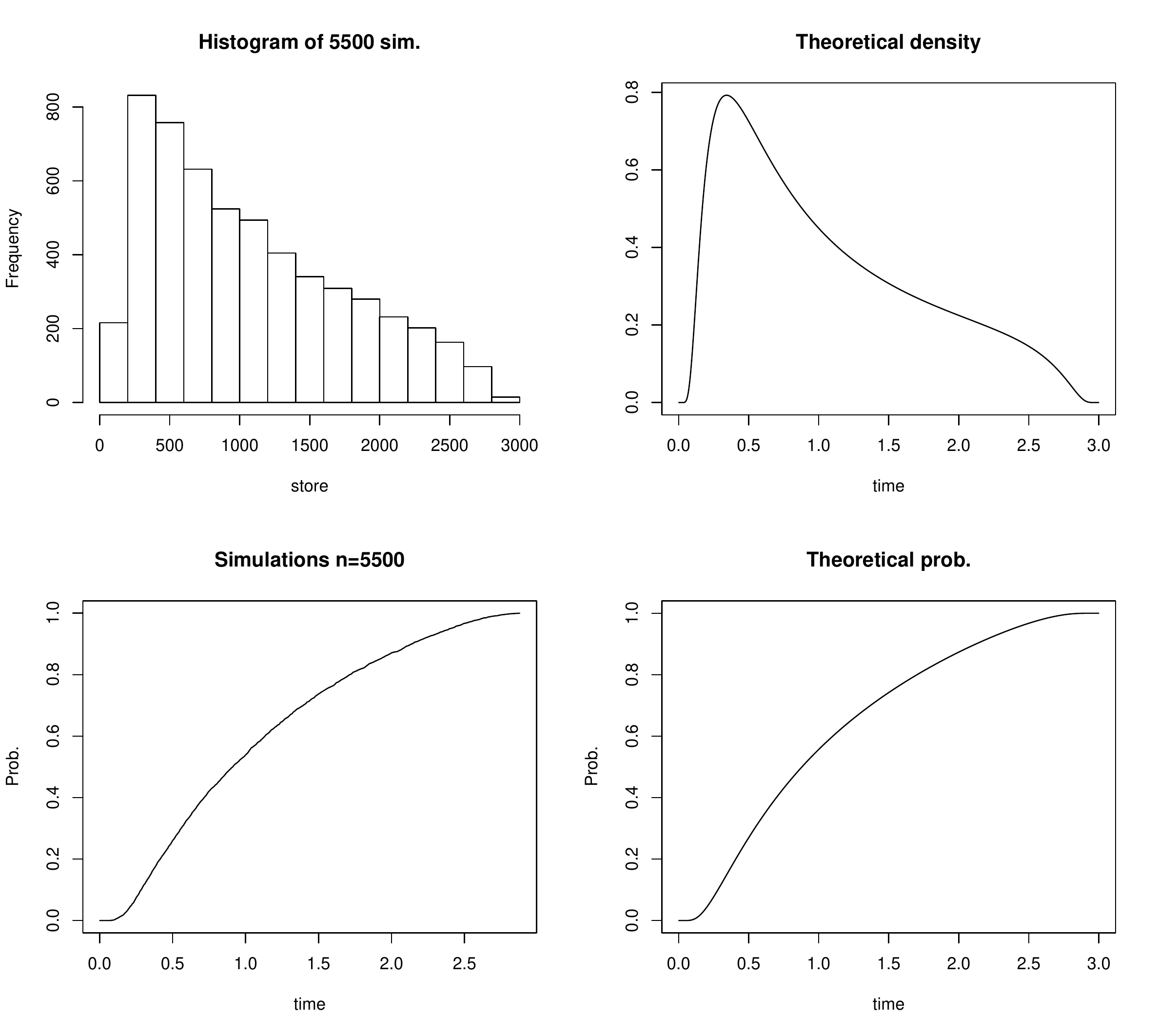}
\caption{(Example \ref{exa1}). The graph is plotted in R. The upper left graph is the histogram of  the (simulated) first time that a Brownian bridge started at $y=1$, and absorbed at $c=0$ at time $s=3$, reaches the linear boundary $f(t)=2-t$. In the upper right frame we have  its theoretical density. In the lower left we have the simulated distribution, and finally on its right we have its theoretical counterpart. }\label{fig1}
\end{figure}
\begin{figure}
\hspace{-1.3 cm}\includegraphics[scale=.5,height=14cm,width=14cm]{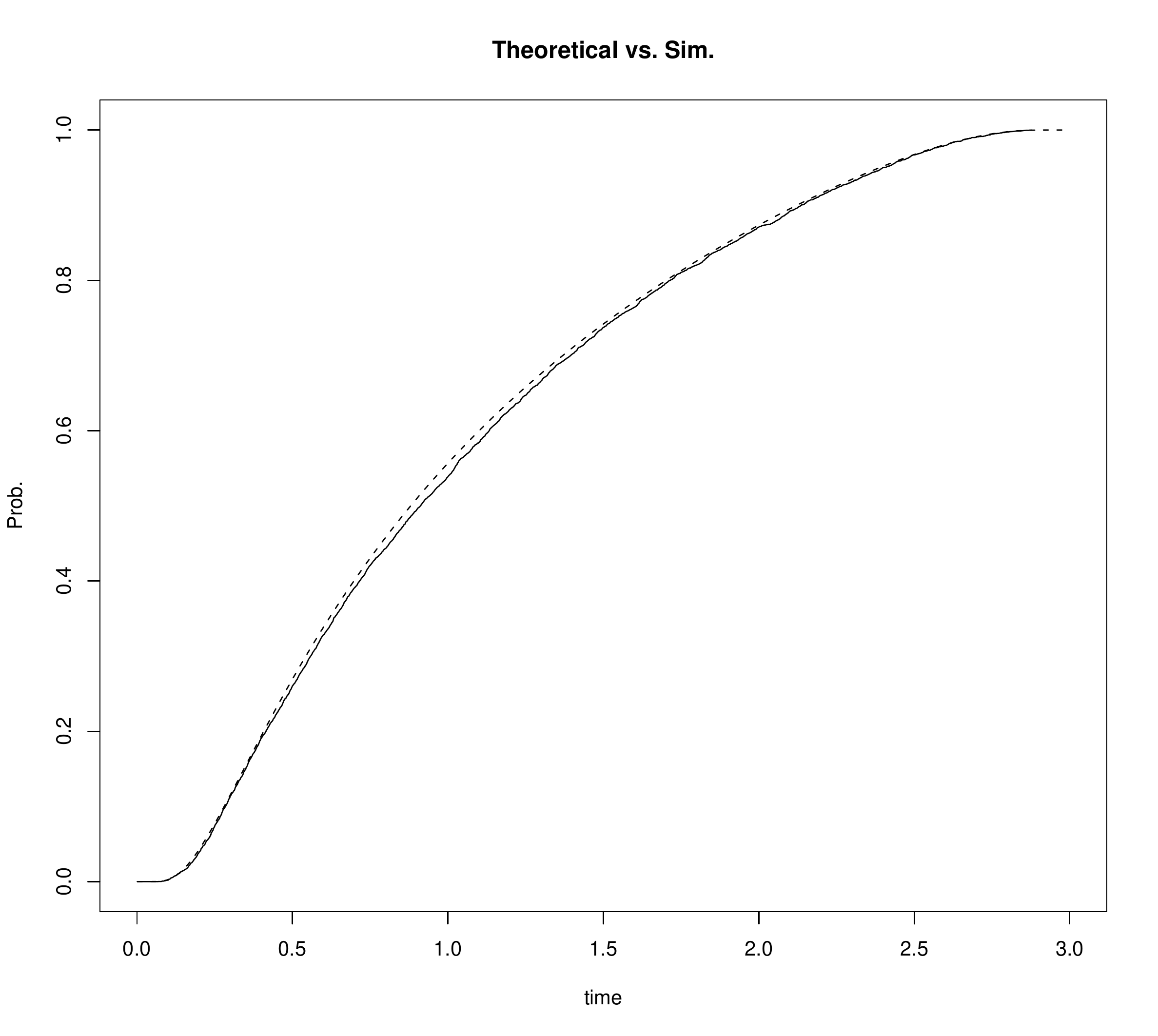}
\caption{(Example \ref{exa1}). The graph is plotted in R. The dotted line is the theoretical probability. The hard line is a simulation with $n=5500$. }\label{fig2}
\end{figure}

\begin{figure}
\hspace{-1.3 cm}\includegraphics[scale=.5,height=14cm,width=14cm]{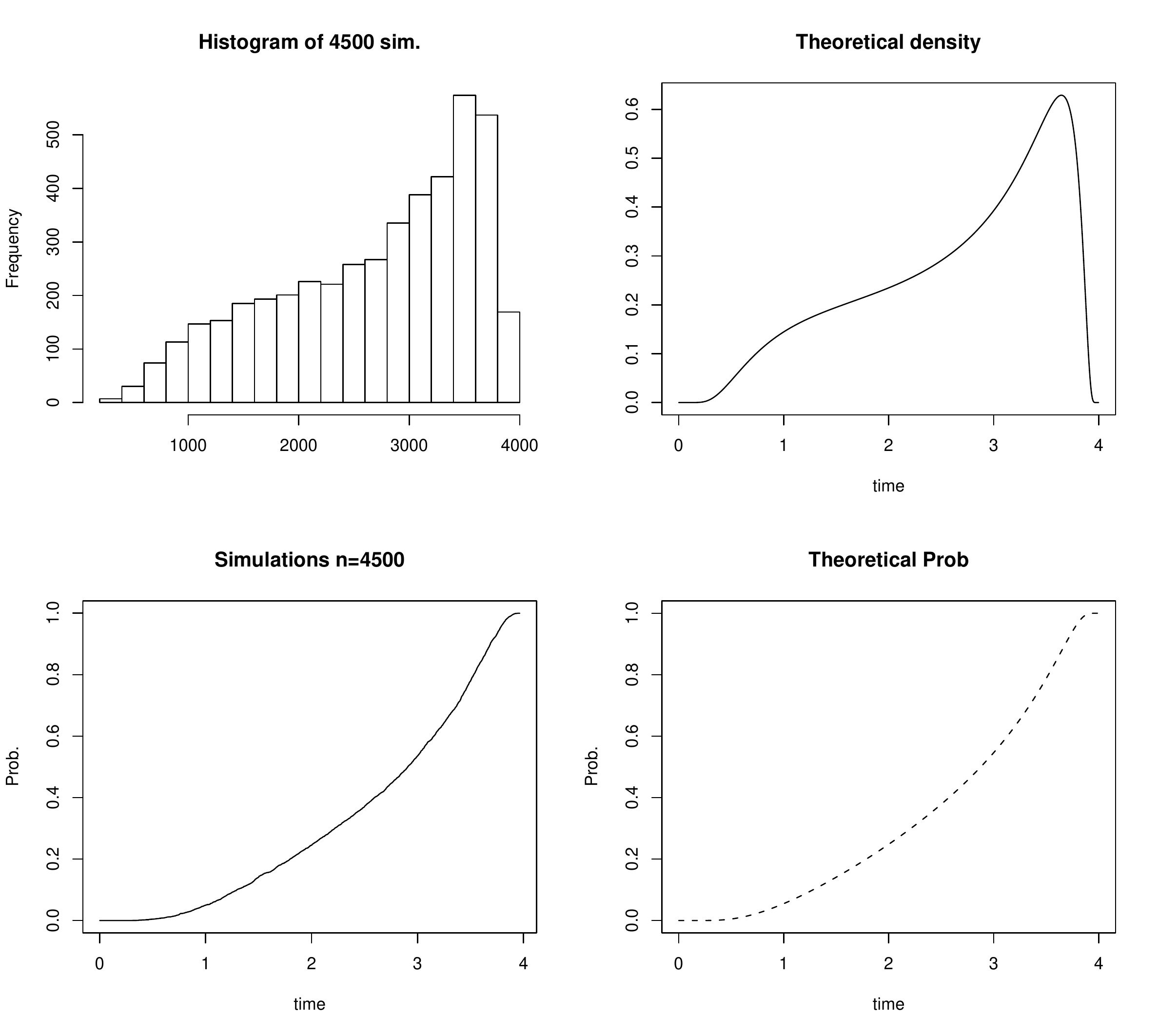}
\caption{(Example \ref{exa3}). The graph is plotted in R. The upper left graph is the histogram of  the (simulated) first time that a 3-D Bessel bridge started at $y=3$, and absorbed at $s=4$, reaches level $a=1$. In the upper right frame we have the its theoretical density. In the lower left we have the simulated distribution, and finally on its right we have its theoretical counterpart. }\label{fig4}
\end{figure}

\begin{figure}
\hspace{-1.3 cm}\includegraphics[scale=.5,height=14cm,width=14cm]{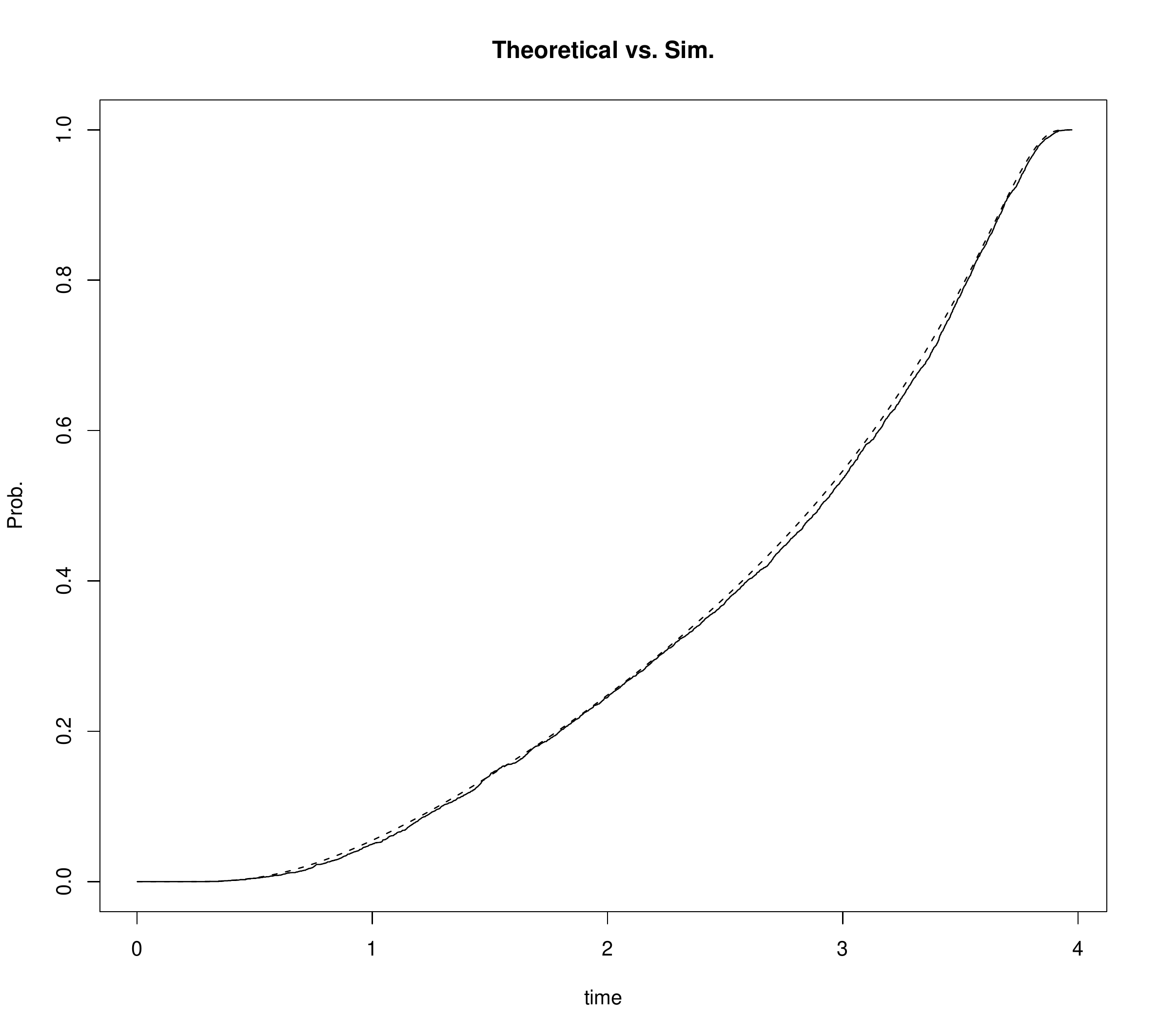}
\caption{(Example \ref{exa3}). The graph is plotted in R. The dotted line is the theoretical probability. The hard line is a simulation with $n=4500$. }\label{fig5}
\end{figure}

\begin{figure}
\hspace{-0.9 cm}\includegraphics[scale=.5,height=12cm,width=12cm]{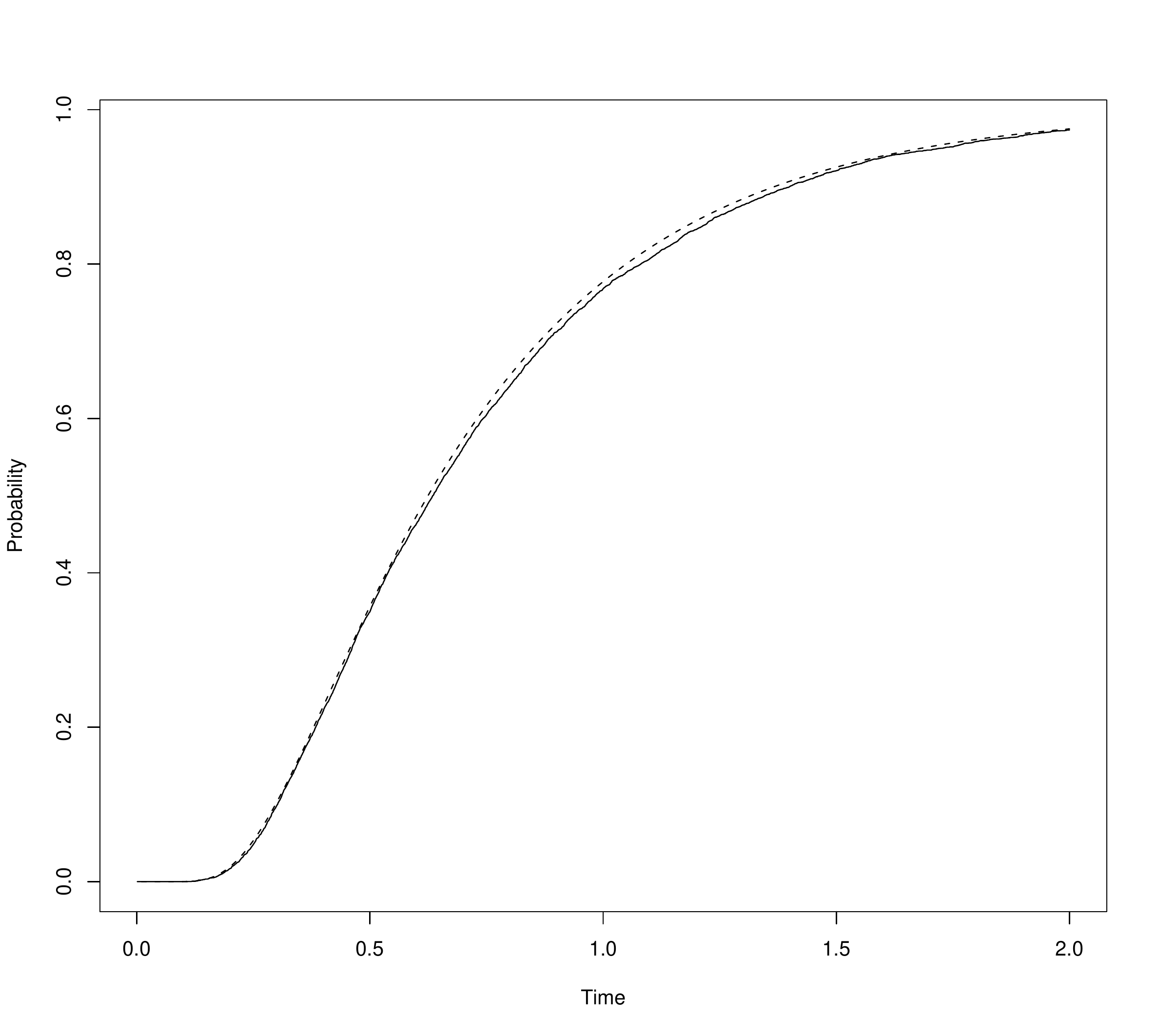}
\caption{(Example \ref{exa3}). The graph is plotted in R. The dotted line is the theoretical probability with $a=1.5$. The hard line is a simulation with $n=5500$. }\label{fig3}
\end{figure}

\begin{figure}
\hspace{-.5 cm}\includegraphics[scale=.5,height=13cm,width=13cm]{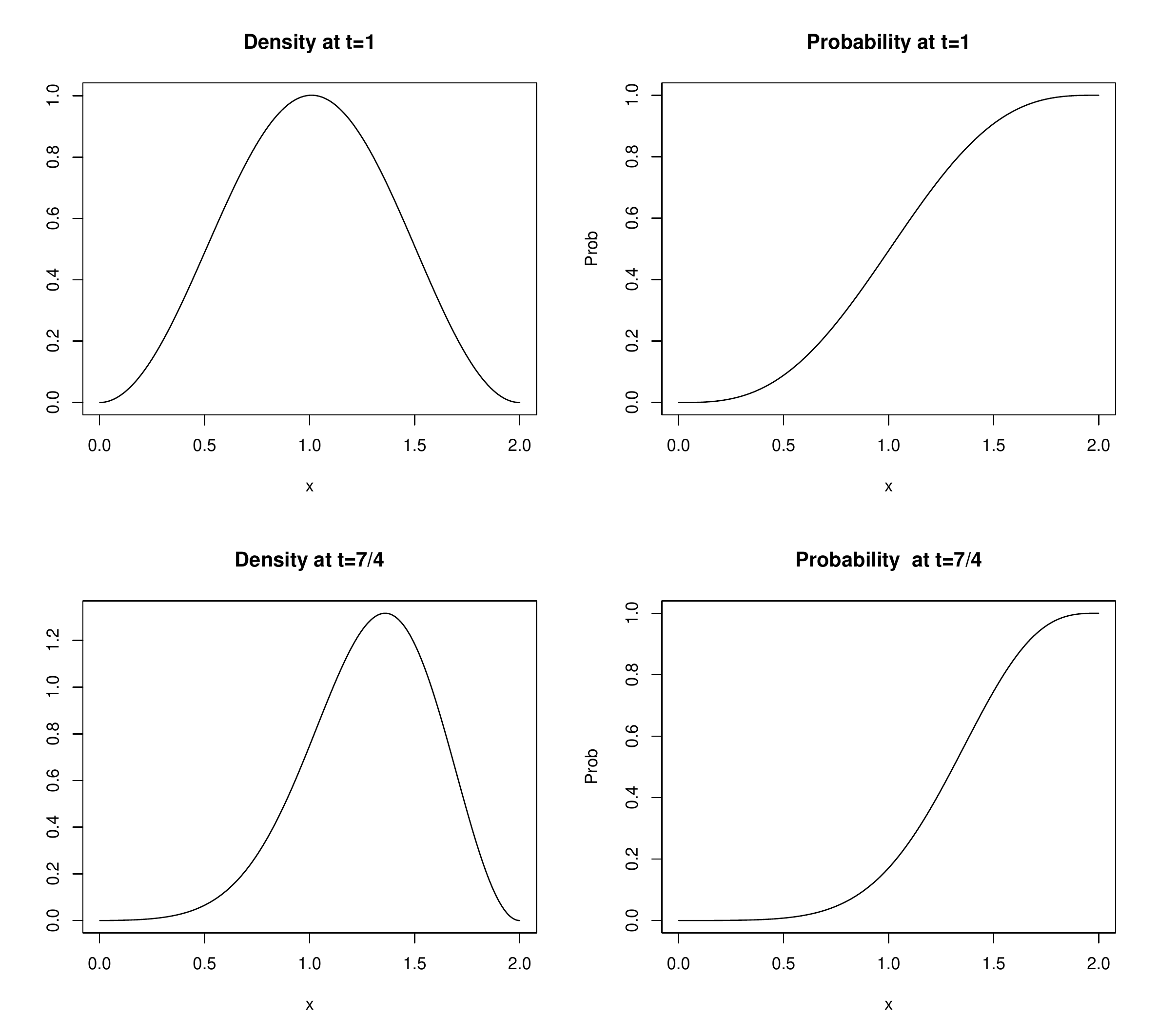}
\caption{(Example \ref{exa8}). The graphs are plotted in R. The upper left graph is the density of a Wiener process (started at $x=1/2$), absorbed at zero and that reaches $a=2$ for the first time at $s=2$; evaluated at $t=1$. Th upper right graph is the corresponding distribution at $t=1$. In the lower left figure we have the density at time $t=7/4$. Finally, the lower right panel is its corresponding distribution.}   \label{fig8} 
\end{figure}

\end{document}